\newtheorem{Th}{Theorem}[section]
\newtheorem{Lemma}[Th]{Lemma}
\theoremstyle{definition}
\newtheorem{Example}[Th]{Example}
\newtheorem{df}[Th]{Definition}
\theoremstyle{remark}
\newtheorem{Remark}[Th]{Remark}
\newcommand{\R}{\mathbb{R}}
\begin{document}

\title[Local Pareto optimality on time scales]{%
Necessary and sufficient conditions for local Pareto optimality on time scales}

\keywords{Time scales, calculus of variations, isoperimetric
problems with multiple constraints, multiobjective variational
problems, locally Pareto optimal solutions.}

\subjclass[2000]{49K15, 90C29.}

\author[A. B. Malinowska\ and\ D. F. M. Torres]{%
Agnieszka B. Malinowska \and Delfim F. M. Torres}

\address{Faculty of Computer Science,
Bia{\l}ystok Technical University,
15-351 Bia\l ystok, Poland}

\email{abmalina@pb.bialystok.pl}

\address{Department of Mathematics,
University of Aveiro,
3810-193 Aveiro, Portugal}

\email{delfim@ua.pt}

\thanks{Research partially supported by the
{\it Centre for Research on Optimization and Control} (CEOC) from
the {\it Portuguese Foundation for Science and Technology} (FCT),
cofinanced by the European Community Fund FEDER/POCI 2010 and by KBN
under Bia{\l}ystok Technical University Grant S/WI/1/07.}

\maketitle


\begin{abstract}We study a multiobjective variational problem on time scales.
For this problem, necessary and sufficient conditions for weak
local Pareto optimality are given. We also prove a necessary
optimality condition for the isoperimetric problem
with multiple constraints on time scales.
\end{abstract}


\section{Introduction}

The calculus on time scales was initiated by Aulbach and Hilger
(see \textrm{e.g.} \cite{b2}) in order to create a theory that can
unify discrete and continuous analysis. Since then, much active
research has been observed all over the world (see \textrm{e.g.}
\cite{b1,Zbig, b7, b4} and references therein). In this paper we
consider multiobjective variational problems on time scales
(Section~\ref{subsec:po}). By developing a theory for
multiobjective optimization problems on a time scale, one obtains
more general results that can be applied to discrete, continuous
or hybrid domains. To the best of the authors' knowledge, no study
has been done in this field for time scales. The main results of
the paper provide methods for identifying weak locally Pareto
optimal solutions; versions for continuous domain one can find
\textrm{e.g.} in \cite{b3, b6, b5}. We show that necessary
optimality conditions for isoperimetric problems are also
necessary for local Pareto optimality for a multiobjective
variational problem on a time scale (Theorem~\ref{nc}), and the sufficient
condition for local Pareto optimality can be reduced to the
sufficient optimal condition for a basic problem of the calculus
of variations on a time scale (Theorem~\ref{sc}). We also prove a necessary
optimality condition for the isoperimetric problem with multiple
constraints on time scales (Section~\ref{subsec:IP}).


\section{Time scales calculus}

In this section we introduce basic definitions and results that will
be needed for the rest of the paper. For a more general theory of
calculus on time scales, we refer the reader to \cite{livro}.

A nonempty closed subset of $\mathbb{R}$ is called a \emph{time
scale} and it is denoted by $\mathbb{T}$.

The \emph{forward jump operator}
$\sigma:\mathbb{T}\rightarrow\mathbb{T}$ is defined by
$$\sigma(t)=\inf{\{s\in\mathbb{T}:s>t}\},
\mbox{ for all $t\in\mathbb{T}$},$$
while the \emph{backward jump operator}
$\rho:\mathbb{T}\rightarrow\mathbb{T}$ is defined by
$$\rho(t)=\sup{\{s\in\mathbb{T}:s<t}\},\mbox{ for all
$t\in\mathbb{T}$},$$ with $\inf\emptyset=\sup\mathbb{T}$
(\textrm{i.e.} $\sigma(M)=M$ if $\mathbb{T}$ has a maximum $M$)
and $\sup\emptyset=\inf\mathbb{T}$ (\textrm{i.e.} $\rho(m)=m$ if
$\mathbb{T}$ has a minimum $m$).

A point $t\in\mathbb{T}$ is called \emph{right-dense},
\emph{right-scattered}, \emph{left-dense} and
\emph{left-scattered} if $\sigma(t)=t$, $\sigma(t)>t$, $\rho(t)=t$
and $\rho(t)<t$, respectively.

Throughout the paper we let $\mathbb{T}=[a,b]\cap\mathbb{T}_{0}$
with $a<b$ and $\mathbb{T}_0$ a time scale
containing $a$ and $b$.

\begin{Remark}
The time scales $\mathbb{T}$ considered in this work have a
maximum $b$ and, by definition, $\sigma(b) = b$.
\end{Remark}

The \emph{graininess function}
$\mu:\mathbb{T}\rightarrow[0,\infty)$ is defined by
$$\mu(t)=\sigma(t)-t,\mbox{ for all $t\in\mathbb{T}$}.$$

Following \cite{livro}, we define
$\mathbb{T}^k=\mathbb{T}\backslash(\rho(b),b]$,
$\mathbb{T}^{k^2}=\left(\mathbb{T}^k\right)^k$.

We say that a function $f:\mathbb{T}\rightarrow\mathbb{R}$ is
\emph{delta differentiable} at $t\in\mathbb{T}^k$ if there exists
a number $f^{\Delta}(t)$ such that for all $\varepsilon>0$ there
is a neighborhood $U$ of $t$ (\textrm{i.e.}
$U=(t-\delta,t+\delta)\cap\mathbb{T}$ for some $\delta>0$) such
that
$$|f(\sigma(t))-f(s)-f^{\Delta}(t)(\sigma(t)-s)|
\leq\varepsilon|\sigma(t)-s|,\mbox{ for all $s\in U$}.$$ We call
$f^{\Delta}(t)$ the \emph{delta derivative} of $f$ at $t$ and say
that $f$ is \emph{delta differentiable} on $\mathbb{T}^k$ provided
$f^{\Delta}(t)$ exists for all $t\in\mathbb{T}^k$.

For delta differentiable functions $f$ and $g$, the next formula
holds:

\begin{equation*}
\begin{aligned}
(fg)^\Delta(t)&=f^\Delta(t)g^\sigma(t)+f(t)g^\Delta(t)\\
&=f^\Delta(t)g(t)+f^\sigma(t)g^\Delta(t),
\end{aligned}
\end{equation*}
where we abbreviate here and throughout the text $f\circ\sigma$ by
$f^\sigma$.

A function $f:\mathbb{T}\rightarrow\mathbb{R}$ is called
\emph{rd-continuous} if it is continuous at right-dense points and
if its left-sided limit exists at left-dense points. We denote the
set of all rd-continuous functions by C$_{\textrm{rd}}$ and the set
of all delta differentiable functions with rd-continuous derivative
by C$_{\textrm{rd}}^1$.

It is known that rd-continuous functions possess an
\emph{antiderivative}, \textrm{i.e.} there exists a function $F$
with $F^\Delta=f$, and in this case the \emph{delta integral} is
defined by $\int_{c}^{d}f(t)\Delta t=F(c)-F(d)$ for all
$c,d\in\mathbb{T}$. The delta integral has the following property:
\begin{equation*}
\int_t^{\sigma(t)}f(\tau)\Delta\tau=\mu(t)f(t).
\end{equation*}

We now present the integration by parts formulas for the delta
integral:

\begin{Lemma}(\cite{livro})
\label{integracao partes} If $c,d\in\mathbb{T}$ and
$f,g\in$C$_{\textrm{rd}}^1$, then

\begin{equation*}
\int_{c}^{d}f(\sigma(t))g^{\Delta}(t)\Delta t
=\left[(fg)(t)\right]_{t=c}^{t=d}-\int_{c}^{d}f^{\Delta}(t)g(t)\Delta
t;
\end{equation*}

\begin{equation*}
\int_{c}^{d}f(t)g^{\Delta}(t)\Delta t
=\left[(fg)(t)\right]_{t=c}^{t=d}-\int_{c}^{d}
f^{\Delta}(t)g(\sigma(t))\Delta t.
\end{equation*}

\end{Lemma}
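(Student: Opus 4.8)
The plan is to derive both identities directly from the two forms of the product rule for the delta derivative recorded above, combined with the fundamental relation $\int_{c}^{d}h^{\Delta}(t)\Delta t=\left[h(t)\right]_{t=c}^{t=d}$, valid whenever $h^{\Delta}$ is rd-continuous so that $h$ itself serves as an antiderivative of $h^{\Delta}$. The key observation is that the two product-rule expansions already isolate, up to a trivial rearrangement, exactly the two integrands $f^{\sigma}g^{\Delta}$ and $fg^{\Delta}$ that appear on the left-hand sides of the two formulas.

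For the first formula I would start from the second expansion $(fg)^{\Delta}=f^{\Delta}g+f^{\sigma}g^{\Delta}$ and solve for the last term, obtaining $f(\sigma(t))g^{\Delta}(t)=(fg)^{\Delta}(t)-f^{\Delta}(t)g(t)$. Integrating this identity over $[c,d]\cap\mathbb{T}$ and applying the fundamental relation to $\int_{c}^{d}(fg)^{\Delta}(t)\Delta t=\left[(fg)(t)\right]_{t=c}^{t=d}$ yields the claim. The second formula is obtained in precisely the same way from the first expansion $(fg)^{\Delta}=f^{\Delta}g^{\sigma}+fg^{\Delta}$, now solving for $fg^{\Delta}$, integrating, and again invoking the fundamental relation. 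Thus both assertions reduce to a one-line algebraic manipulation of the product rule followed by integration.

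The only point that requires care --- and hence the main (if mild) obstacle --- is justifying that the three integrals are well defined and that $fg$ is a legitimate antiderivative of $(fg)^{\Delta}$. Since $f,g\in$\,C$_{\textrm{rd}}^{1}$, the functions $f$, $g$, $f^{\Delta}$, $g^{\Delta}$ are all rd-continuous; I would invoke the standard facts that products of rd-continuous functions are rd-continuous and that $h^{\sigma}$ is rd-continuous whenever $h$ is. This shows that $(fg)^{\Delta}$, $f^{\sigma}g^{\Delta}$ and $fg^{\Delta}$ are rd-continuous, hence delta integrable, and that $fg$ is an antiderivative of $(fg)^{\Delta}$, so the fundamental relation applies at each step. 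Notably, no case distinction between right-dense and right-scattered points is needed, because the product rule already holds pointwise on $\mathbb{T}^{k}$ and the remaining argument is purely algebraic.
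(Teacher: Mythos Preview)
Your argument is correct and is precisely the standard derivation: rearrange the two forms of the product rule and apply the fundamental theorem of the delta integral. The paper does not actually supply a proof of this lemma; it is quoted from \cite{livro}, where the same product-rule-plus-antiderivative argument is used.
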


We say that $f:\mathbb{T}\rightarrow\mathbb{R}^{n}$ is a
\emph{rd-continuous} (a \emph{delta differentiable}) function if
each component of $f$, $f_{i}:\mathbb{T}\rightarrow\mathbb{R}$, is a
rd-continuous (a delta differentiable) function. By abuse of
notation, we continue to write C$_{\textrm{rd}}$ for the set of all
rd-continuous vector valued functions and C$_{\textrm{rd}}^1$ for
the set of all delta differentiable vector valued functions with
rd-continuous derivative.

The following Dubois-Reymond lemma for the calculus of variations on
time scales will be useful for our purposes.

\begin{Lemma}(Lemma of Dubois-Reymond \cite{b7})
\label{lemma:DR}
Let $g\in C_{\textrm{rd}}$,
$g:[a,b]^k\rightarrow\mathbb{R}^n$. Then,
$$\int_{a}^{b}g(t) \cdot \eta^\Delta(t)\Delta t=0  \quad
\mbox{for all $\eta\in C_{\textrm{rd}}^1$ with
$\eta(a)=\eta(b)=0$}$$ if and only if $g(t)=c \mbox{ on $[a,b]^k$
for some $c\in\mathbb{R}^n$}$.
\end{Lemma}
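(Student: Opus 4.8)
The plan is to prove both implications, the substantive work residing entirely in the necessity direction. The sufficiency (``if'') part is immediate: if $g \equiv c$ for some constant $c \in \mathbb{R}^n$, then by the fundamental theorem of calculus for the delta integral,
$$\int_a^b g(t)\cdot\eta^\Delta(t)\,\Delta t = c\cdot\int_a^b \eta^\Delta(t)\,\Delta t = c\cdot\bigl(\eta(b)-\eta(a)\bigr) = 0$$
for every $\eta \in C_{\mathrm{rd}}^1$ with $\eta(a)=\eta(b)=0$.

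For the necessity (``only if''), the key idea is to construct a single admissible variation $\eta$ that converts the hypothesis into a statement about a nonnegative integrand. First I would fix the constant by averaging: set $c = \frac{1}{b-a}\int_a^b g(t)\,\Delta t \in \mathbb{R}^n$, which is the unique vector making $\int_a^b \bigl(g(t)-c\bigr)\,\Delta t = 0$ (here $b-a>0$ since $a<b$). Since $g$ is rd-continuous, so is $g-c$, and hence $g-c$ possesses an antiderivative; I would then define
$$\eta(t) = \int_a^t \bigl(g(\tau)-c\bigr)\,\Delta\tau.$$
By construction $\eta \in C_{\mathrm{rd}}^1$ with $\eta^\Delta = g-c$, while $\eta(a) = 0$ and $\eta(b) = \int_a^b\bigl(g-c\bigr)\,\Delta t = 0$ by the choice of $c$, so $\eta$ is a legitimate test function.

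It then remains to exploit the hypothesis for this particular $\eta$. Using $\int_a^b c\cdot\eta^\Delta\,\Delta t = c\cdot\bigl(\eta(b)-\eta(a)\bigr) = 0$ together with the assumed vanishing of $\int_a^b g\cdot\eta^\Delta\,\Delta t$, I would obtain
$$0 = \int_a^b \bigl(g(t)-c\bigr)\cdot\eta^\Delta(t)\,\Delta t = \int_a^b \bigl(g(t)-c\bigr)\cdot\bigl(g(t)-c\bigr)\,\Delta t = \int_a^b |g(t)-c|^2\,\Delta t.$$

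The main obstacle is the final deduction that a nonnegative rd-continuous integrand with vanishing delta integral must vanish identically on $[a,b]^k$, which forces one to treat the two types of points separately. At a right-scattered $t$ the quantity $\mu(t)\,|g(t)-c|^2 = \int_t^{\sigma(t)}|g-c|^2\,\Delta\tau$ is itself a nonnegative summand of the total integral and must therefore be zero, giving $g(t)=c$; at a right-dense $t$ the conclusion follows from rd-continuity, since a strictly positive value $|g(t)-c|^2>0$ would persist on a one-sided neighborhood of positive measure and contribute a strictly positive amount to the integral. In either case $g(t)=c$ throughout $[a,b]^k$, which completes the proof.
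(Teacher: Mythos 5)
Your proof is correct. The paper itself offers no proof of this lemma --- it is imported from Bohner's ``Calculus of variations on time scales'' \cite{b7} --- and the argument there is essentially the one you give: set $c$ equal to the mean value $\frac{1}{b-a}\int_a^b g(t)\,\Delta t$, test the hypothesis against the admissible variation $\eta(t)=\int_a^t\bigl(g(\tau)-c\bigr)\,\Delta\tau$ to get $\int_a^b |g(t)-c|^2\,\Delta t=0$, and conclude $g\equiv c$ on $[a,b]^k$ by treating right-scattered points via $\mu(t)|g(t)-c|^2=\int_t^{\sigma(t)}|g(\tau)-c|^2\,\Delta\tau$ and right-dense points via rd-continuity, exactly as you do.
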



\section{Main Results}

We begin by proving necessary optimality conditions for
isoperimetric problems on time scales (\S\ref{subsec:IP}). In
\S\ref{subsec:po} we show that Pareto solutions of multiobjective
variational problems on time scales are minimizers of a certain
family of isoperimetric problems on time scales.

\subsection{Isoperimetric problem on time scales}
\label{subsec:IP}

\begin{df}
For $f:[a,b]\rightarrow \mathbb{R}^{n}$ we define the norm
\begin{equation*}
    \|f\|_{C^{1}_{rd}}=
    \max_{t\in[a,b]^{k}}\|f^{\sigma}(t)\|
    +\max_{t\in[a,b]^{k}}\|f^{\triangle}(t)\| \, ,
\end{equation*}
where $\|\cdot\|$ stands for any norm in $\mathbb{R}^n$.
\end{df}

Let $\mathcal{L}:C^{1}_{rd}\rightarrow \R $ be a functional
defined on the function space $C^{1}_{rd}$ endowed with the norm
$\| \cdot \|_{C^{1}_{rd}}$ and let $A\subseteq C^{1}_{rd}$.
\begin{df}
A function $\hat{f}\in A$ is called a \emph{weak local minimum} of
$\mathcal{L}$ provided there exists $\delta >0$ such that
$\mathcal{L}[\hat{f}]\leq \mathcal{L}[f]$ for all $f\in A$ with
$\|f-\hat{f}\|_{C^{1}_{rd}}<\delta$.
\end{df}

Now, let us consider a functional of the form
\begin{equation}\label{vp}
     \mathcal{L}[y]=\int_{a}^{b}L(t,y^{\sigma}(t),y^{\triangle}(t))\triangle
     t ,
\end{equation}
where $a, b\in \mathbb {T}$ with $a<b$, $L(t,s,v):[a,b]^{k}\times
\R^{n} \times \R^{n} \rightarrow \R$ has partial continuous
derivatives with respect to the second and third variables for all
$t\in[a,b]^{k}$, and $L(t,\cdot,\cdot)$ and its partial derivatives
are rd-continuous at $t$. The \emph{isoperimetric problem} consists
of finding a function
$y$ satisfying:\\
(i) the boundary conditions
\begin{equation}\label{bc}
    y(a)=\alpha\, , \quad y(b)=\beta \, , \quad \alpha, \beta\in \R^{n} \, ;
\end{equation}
and \\
(ii) constraints of the form
\begin{equation}\label{con}
    \mathcal{G}_{i}[y]=\int_{a}^{b}G_{i}(t,y^{\sigma}(t),y^{\triangle}(t))\triangle
    t=\xi_{i},  \quad i=1,\ldots m ,
\end{equation}
where $\xi_{i},i=1,\ldots m$, are specified real constrains,
$G_{i}(t,s,v):[a,b]^{k}\times \R^{n} \times \R^{n} \rightarrow \R$,
$i= 1,\ldots m $, have partial continuous derivatives with respect
to the second and third variables for all $t\in[a,b]^{k}$, and
$G_{i}(t,\cdot,\cdot)$ and their partial derivatives are
rd-continuous at $t$; that takes \eqref{vp} to a minimum.

\begin{df}
Let $\mathcal{L}$ be a functional defined on $C^{1}_{rd}$. The first
variation of $\mathcal{L}$ at $y \in C^{1}_{rd}$ in the direction
$\eta\in C^{1}_{rd}$, also called \emph{G\^{a}teaux derivative} with
respect to $\eta$ at $y$, is defined as
\begin{equation*}
    \delta\mathcal{L}[y;\eta]=lim_{\varepsilon\rightarrow
    0}\frac{\mathcal{L}[y+\varepsilon\eta]-\mathcal{L}[y]}{\varepsilon}=\frac{\partial}{\partial\varepsilon}\mathcal{L}[y+\varepsilon\eta]|_{\varepsilon=0}
\end{equation*}
(provided it exists). If the limit exists for all $\eta\in
C^{1}_{rd}$, then $\mathcal{L}$ is said to be \emph{G\^{a}teaux
differentiable} at $y$.
\end{df}
 The existence of G\^{a}teaux derivative $\delta\mathcal{L}[y;\eta]$
 presupposes that:\\
 (i) $\mathcal{L}[y]$ is defined;\\
 (ii) $\mathcal{L}[y+\varepsilon\eta]$ is defined for all sufficiently small
 $\varepsilon$.

\begin{Th}\label{lm}
Let $\mathcal{L},\mathcal{G}_{1},\ldots,\mathcal{G}_{m}$ be functionals
defined in a neighborhood of $\hat{y}$ and having continuous
G\^{a}teaux derivative in this neighborhood. Suppose that $\hat{y}$
is a weak local minimum of \eqref{vp} subject to the boundary
conditions \eqref{bc} and the isoperimetric constrains \eqref{con}.
Then, either: \\
(i) $ \forall v_{j}\in C^{1}_{rd}$, $j=1,\ldots,m$
\begin{equation}\label{lmi}
\left|
  \begin{array}{cccc}
    \delta\mathcal{G}_{1}[\hat{y};v_{1}] & \delta\mathcal{G}_{1}[\hat{y};v_{2}]& \cdots & \delta\mathcal{G}_{1}[\hat{y};v_{m}]\\
    \delta\mathcal{G}_{2}[\hat{y};v_{1}]& \delta\mathcal{G}_{2}[\hat{y};v_{2}] & \cdots & \delta\mathcal{G}_{2}[\hat{y};v_{m}] \\
    \vdots & \vdots& \vdots & \vdots \\
    \delta\mathcal{G}_{m}[\hat{y};v_{1}] & \delta\mathcal{G}_{m}[\hat{y};v_{2}] & \cdots & \delta\mathcal{G}_{m}[\hat{y};v_{m}] \\
  \end{array}
\right|=0
\end{equation}
or\\
(ii) there exist constants $\lambda_{i}\in \R$, $i=1,\ldots,m$ for
which
\begin{equation}\label{lmii}
\delta\mathcal{L}[\hat{y};\eta]=\sum_{i=1}^{m}\lambda_{i}\delta\mathcal{G}_{i}[\hat{y};\eta]
\quad \forall \eta \in C^{1}_{rd}.
\end{equation}
\end{Th}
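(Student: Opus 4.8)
The plan is to prove the dichotomy by assuming that alternative (i) fails and deducing alternative (ii). The negation of (i) is the statement that there exist directions $v_1^0,\ldots,v_m^0\in C^1_{rd}$ for which the determinant in \eqref{lmi} is nonzero; equivalently, the matrix $M=\bigl(\delta\mathcal{G}_i[\hat{y};v_j^0]\bigr)_{i,j=1}^m$ is nonsingular. Conceptually, (i) holds exactly when the linear functionals $\delta\mathcal{G}_1[\hat{y};\cdot],\ldots,\delta\mathcal{G}_m[\hat{y};\cdot]$ are linearly dependent on $C^1_{rd}$, so its failure means these constraint variations are independent; but for the argument I only need the direct logical negation, which furnishes the nonsingular $M$. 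I fix such directions $v_1^0,\ldots,v_m^0$ once and for all.

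Next I fix an arbitrary $\eta\in C^1_{rd}$ and introduce the finite-dimensional perturbation
\[
y(\varepsilon_0,\varepsilon_1,\ldots,\varepsilon_m)=\hat{y}+\varepsilon_0\eta+\sum_{j=1}^m\varepsilon_j v_j^0,
\]
together with the map $\Phi=(\Phi_1,\ldots,\Phi_m):\R^{m+1}\to\R^m$ given by $\Phi_i(\varepsilon_0,\ldots,\varepsilon_m)=\mathcal{G}_i[y(\varepsilon_0,\ldots,\varepsilon_m)]-\xi_i$. Because $\hat{y}$ satisfies the constraints \eqref{con} we have $\Phi(0,\ldots,0)=0$, and the hypothesis that the $\mathcal{G}_i$ possess continuous G\^{a}teaux derivatives in a neighborhood of $\hat{y}$ makes $\Phi$ continuously differentiable in the finite-dimensional variables, with $\partial\Phi_i/\partial\varepsilon_j$ at the origin equal to $\delta\mathcal{G}_i[\hat{y};v_j^0]$, i.e.\ to the entries of the nonsingular matrix $M$. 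The implicit function theorem then yields $C^1$ functions $\varepsilon_j=\varepsilon_j(\varepsilon_0)$, $j=1,\ldots,m$, defined for $|\varepsilon_0|$ small, with $\varepsilon_j(0)=0$, along which $\Phi\equiv 0$. The resulting curve $\varepsilon_0\mapsto y(\varepsilon_0)$ thus stays in the constraint set, passes through $\hat{y}$ at $\varepsilon_0=0$, and, since $\varepsilon_j(\varepsilon_0)\to 0$ as $\varepsilon_0\to 0$, lies in any prescribed $C^1_{rd}$-neighborhood of $\hat{y}$ for small $\varepsilon_0$.

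Since $\hat{y}$ is a weak local minimum, the scalar function $\varepsilon_0\mapsto\mathcal{L}[y(\varepsilon_0)]$ attains a local minimum at $\varepsilon_0=0$, so its derivative vanishes there. Differentiating at $\varepsilon_0=0$ and using the chain rule gives
\[
\delta\mathcal{L}[\hat{y};\eta]+\sum_{j=1}^m\varepsilon_j'(0)\,\delta\mathcal{L}[\hat{y};v_j^0]=0,
\]
while differentiating the identities $\Phi_i\equiv 0$ gives
\[
\delta\mathcal{G}_i[\hat{y};\eta]+\sum_{j=1}^m\varepsilon_j'(0)\,\delta\mathcal{G}_i[\hat{y};v_j^0]=0,\qquad i=1,\ldots,m.
\]
I then define $\lambda=(\lambda_1,\ldots,\lambda_m)$ as the unique solution of $M^{\top}\lambda=\bigl(\delta\mathcal{L}[\hat{y};v_j^0]\bigr)_{j=1}^m$, which exists because $M$ is nonsingular; crucially, $\lambda$ depends only on $\hat{y}$ and the fixed directions $v_j^0$, not on $\eta$. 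Substituting $\delta\mathcal{L}[\hat{y};v_j^0]=\sum_i\lambda_i\,\delta\mathcal{G}_i[\hat{y};v_j^0]$ into the first displayed relation and then eliminating the sums $\sum_j\varepsilon_j'(0)\,\delta\mathcal{G}_i[\hat{y};v_j^0]$ via the constraint relations yields exactly $\delta\mathcal{L}[\hat{y};\eta]=\sum_{i=1}^m\lambda_i\,\delta\mathcal{G}_i[\hat{y};\eta]$. As $\eta$ was arbitrary and $\lambda$ is independent of it, this is precisely \eqref{lmii}.

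I expect the main technical obstacle to be the justification that $\Phi$ is genuinely $C^1$ in the parameters $\varepsilon_0,\ldots,\varepsilon_m$, which is what licenses the implicit function theorem; this is exactly where the standing hypothesis of continuous (as opposed to merely existing) G\^{a}teaux derivatives is used, to pass from directional differentiability of the $\mathcal{G}_i$ to differentiability of their restriction to the finite-dimensional affine slice. A secondary point requiring care is the observation that the multipliers $\lambda_i$ built from the $v_j^0$-directions are the same for every $\eta$, so that a single set of constants serves in \eqref{lmii}.
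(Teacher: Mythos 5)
Your proof is correct and reaches the same conclusion, but by a mechanism that differs from the paper's in two respects. The paper also restricts everything to the finite-dimensional affine slice $\hat{y}+p\eta+\sum_j q_j v_j$, but then argues by contradiction with the \emph{inverse} function theorem: if the full $(m+1)\times(m+1)$ Jacobian determinant of $(l,g_1,\ldots,g_m)=(\mathcal{L},\mathcal{G}_1,\ldots,\mathcal{G}_m)$ restricted to the slice were nonzero for some $\eta$, the map would be locally onto, producing admissible competitors with $\mathcal{L}[\acute{y}]<\mathcal{L}[\hat{y}]$ and unchanged constraint values; hence that determinant vanishes for every $\eta$, and the multipliers are read off by expanding the vanishing determinant along its first column, with $\lambda_i=-\mathrm{cof}\,\delta\mathcal{G}_i/\mathrm{cof}\,\delta\mathcal{L}$ (these cofactors do not involve the $\eta$-column, which is why they are constants). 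You instead use the \emph{implicit} function theorem to construct a feasible curve $\varepsilon_0\mapsto y(\varepsilon_0)$ through $\hat{y}$ lying in the constraint set, apply the first-order condition along it, and solve the linear system $M^{\top}\lambda=(\delta\mathcal{L}[\hat{y};v_j^0])_j$ for the multipliers. The two extractions of $\lambda$ are equivalent (yours is Cramer's rule for the paper's cofactor quotients), and both proofs hinge on the same unproved-but-plausible step, namely that continuity of the G\^{a}teaux derivatives makes the restriction to the slice a $C^1$ function of the finite-dimensional parameters; you flag this explicitly, which is good. Your route is arguably cleaner in that it obtains (ii) directly rather than first deriving a vanishing determinant and then manipulating it, and it makes transparent why the $\lambda_i$ do not depend on $\eta$. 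One caveat you share with the paper: the competitors $y(\varepsilon_0)$ (respectively $\acute{y},\grave{y}$) built from arbitrary $\eta,v_j\in C^1_{rd}$ need not satisfy the boundary conditions \eqref{bc}, so strictly speaking the appeal to weak local minimality over the admissible class requires either restricting the directions to vanish at $a$ and $b$ or reading the minimality hypothesis as being over the constraint set \eqref{con} alone; since the paper's own argument has exactly the same feature, this is not a defect of your proposal relative to the source.
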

\begin{proof}
This proof is patterned after the proof of Troutman \cite[Theorem
5.16]{Trout}. Let us consider, for fixed directions $\eta,
v_{1}, v_{2},\ldots,v_{m}$, the auxiliary functions:
\begin{equation*}
\begin{split}
l(p,q_{1},\ldots,q_{m})&= \mathcal{L}[\hat{y}+p\eta+q_{1}v_{1}+\cdots+q_{m}v_{m}],\\
g_{1}(p,q_{1},\ldots,q_{m})&= \mathcal{G}_{1}[\hat{y}+p\eta+q_{1}v_{1}+\cdots+q_{m}v_{m}],\\
&\vdots \\
g_{m}(p,q_{1},\ldots,q_{m})&=
\mathcal{G}_{m}[\hat{y}+p\eta+q_{1}v_{1}+\cdots+q_{m}v_{m}],
\end{split}
\end{equation*}
which are defined in some neighborhood of the origin in
$\R^{m+1}$, since
$\mathcal{L},\mathcal{G}_{1},\ldots,\mathcal{G}_{m}$ themselves are
defined in a neighborhood of $\hat{y}$. Note that the partial
derivative
\begin{equation*}
    l_{p}(p,q_{1},\ldots,q_{m})=\frac{\partial}{\partial
    p}l(p,q_{1},\ldots,q_{m})=\frac{\partial}{\partial
    p}\mathcal{L}[\hat{y}+p\eta+q_{1}v_{1}+\cdots+q_{m}v_{m}]
\end{equation*}
\begin{equation*}
=lim_{\varepsilon\rightarrow
    0}\frac{\mathcal{L}[\hat{y}+(p+\varepsilon)\eta+q_{1}v_{1}+\cdots
    +q_{m}v_{m}]-\mathcal{L}[\hat{y}+p\eta+q_{1}v_{1}+\cdots +q_{m}v_{m}]}{\varepsilon}
\end{equation*}
\begin{equation*}
=lim_{\varepsilon\rightarrow
    0}\frac{\mathcal{L}[y+\varepsilon\eta]-\mathcal{L}[y]}{\varepsilon},
\end{equation*}
with $y=\hat{y}+p\eta+q_{1}v_{1}+ \cdots +q_{m}v_{m}$. Therefore,
$l_{p}(p,q_{1},\ldots,q_{m})=\delta\mathcal{L}[\hat{y};\eta]$.
Similarly we have:
\begin{eqnarray*}
 l_{q_{i}}(p,q_{1},\ldots,q_{m})=\delta\mathcal{L}[\hat{y};v_{i}], \quad i=1,\ldots,m, \\
 (g_{j})_{p}(p,q_{1},\ldots,q_{m})=\delta\mathcal{G}_{j}[\hat{y};\eta], \quad j=1,\ldots,m,\\
(g_{j})_{q_{i}}(p,q_{1},\ldots,q_{m})=\delta\mathcal{G}_{j}[\hat{y};v_{i}],
\quad i=1,\ldots,m, \quad j=1,\ldots,m.
\end{eqnarray*}
Hence, the Jacobian determinant
$\frac{\partial(l,g_{1},\ldots,g_{m})}{\partial (p,q_{1},\ldots,q_{m})}$
evaluated at $(p,q_{1},\ldots,q_{m})=(0,0,\ldots,0)$ is the following:
\begin{equation}\label{jac}
\left|
  \begin{array}{cccc}
    \delta\mathcal{L}[\hat{y};\eta] & \delta\mathcal{L}_{1}[\hat{y};v_{1}]& \cdots & \delta\mathcal{L}_{1}[\hat{y};v_{m}]\\
    \delta\mathcal{G}_{1}[\hat{y};\eta]& \delta\mathcal{G}_{1}[\hat{y};v_{1}] & \cdots & \delta\mathcal{G}_{1}[\hat{y};v_{m}] \\
    \vdots & \vdots& \ddots & \vdots \\
    \delta\mathcal{G}_{m}[\hat{y};\eta] & \delta\mathcal{G}_{m}[\hat{y};v_{1}] & \cdots & \delta\mathcal{G}_{m}[\hat{y};v_{m}] \\
  \end{array}
\right|.
\end{equation}
Note also that the vector valued function $(l,g_{1},\ldots,g_{m})$
has continuous partial derivatives in a neighborhood of the
origin, since $\mathcal{L},\mathcal{G}_{1},\ldots,\mathcal{G}_{m}$
have continuous G\^{a}teaux derivative in the neighborhood of
$\hat{y}$. With this preparation we can prove our theorem. Assume
condition (i) does not hold for one set of directions: $v_{1},
v_{2},\ldots, v_{m}$ and suppose there exists one direction $\eta$
for which the determinant \eqref{jac} is nonvanishing. Therefore,
the classical inverse function theorem applies, i.e. the
application $(l,g_{1},\ldots,g_{m})$ maps a neighborhood of the
origin in $\R^{m+1}$ onto a region containing a full neighborhood
of
$(\mathcal{L}[\hat{y}],\mathcal{G}_{1}[\hat{y}],\ldots,\mathcal{G}_{m}[\hat{y}])$.
That is, one can find pre-image points
$(\acute{p},\acute{q}_{1},\ldots,\acute{q}_{m})$ and
$(\grave{p},\grave{q}_{1},\ldots,\grave{q}_{m})$ near the origin, for
which the points $\acute{y}=\hat{y}+\acute{p}\eta +
\Sigma_{i=1}^{m}\acute{q}_{i}v_{i}$ and
$\grave{y}=\hat{y}+\grave{p}\eta +
\Sigma_{i=1}^{m}\grave{q}_{i}v_{i}$ satisfy the conditions:
\begin{eqnarray*}
 \mathcal{L}[\acute{y}]<\mathcal{L}[\hat{y}]<\mathcal{L}[\grave{y}],\\
\mathcal{G}_{i}[\acute{y}]=\mathcal{G}_{i}[\hat{y}]=\mathcal{G}_{i}[\grave{y}],
\quad i=1,\ldots,m.
\end{eqnarray*}
This shows that $\hat{y}$ cannot be a local extremal for
$\mathcal{L}$ subject to constraints \eqref{con}, contradicting
the hypothesis. Thus, for the specific set of directions: $v_{1},
v_{2},\ldots, v_{m}$ the determinant \eqref{jac} must vanish for each
$\eta\in C^{1}_{rd}$. We expand it by minors of the first column
\begin{equation}\label{det}
\delta\mathcal{L}[\hat{y};\eta]\cdot cof
\delta\mathcal{L}[\hat{y};\eta]+\delta\mathcal{G}_{1}[\hat{y};\eta]\cdot
cof \delta\mathcal{G}_{1}[\hat{y};\eta]+
\ldots+\delta\mathcal{G}_{m}[\hat{y};\eta]\cdot cof
\delta\mathcal{G}_{m}[\hat{y};\eta]=0 \, ,
\end{equation}
where we are using the notation $cof$ to denote
the cofactor. Dividing equation \eqref{det} by $cof
\delta\mathcal{L}[\hat{y};\eta]$, since it is precisely the
nonvanishing determinant
 $\left|
 \begin{array}{c}
 \delta\mathcal{G}_{i}[\hat{y};v_{j}] \\
 i,j=1,\ldots,m\\
 \end{array}
 \right|$,
and setting
\begin{equation*}
\lambda_{i}=-\frac{cof \delta\mathcal{G}_{i}[\hat{y};\eta]}{cof
\delta\mathcal{L}[\hat{y};\eta]}
\end{equation*}
we obtain an equation equivalent to  \eqref{lmii}.
\end{proof}

Note that condition (ii) of Theorem \ref{lm} can be written in the
form
\begin{equation}\label{lmiii}
\delta \left(\mathcal{L}-\sum _{i=1}^{m}\lambda
_{i}\mathcal{G}_{i}[\hat{y};\eta]\right)=0 \quad \forall \eta \in
C^{1}_{rd},
\end{equation}
since the G\^{a}teaux derivative is a linear operation on the
functionals (by the linearity of the ordinary derivative).\\

Now, suppose that assumptions of Theorem \ref{lm} hold but
condition (i) does not hold. Then, equation \eqref{lmiii} is
fulfilled for every $\eta \in C^{1}_{rd}$. Let us consider function $\eta$
such that $\eta(a)=\eta(b)=0$ and denote by $\mathcal{F}$ the
functional $ \mathcal{L}-\sum _{i=1}^{m}\lambda
_{i}\mathcal{G}_{i}.$ Then we have
\begin{equation*}
0=\delta \mathcal{F}[\hat{y};\eta]=\frac{\partial}{\partial
\varepsilon}\mathcal{F}[\hat{y}+\varepsilon \eta]|_{\varepsilon=0}
\end{equation*}
\begin{equation*}
    =\int_{a}^{b}(F_{s}(t,\hat{y}^{\sigma}(t),\hat{y}^{\triangle}(t)
)\eta^{\sigma}(t)+F_{v}(t,\hat{y}^{\sigma}(t),\hat{y}^{\triangle}(t)
)\eta^{\triangle}(t))\triangle t,
\end{equation*}
where the function $F:[a,b]^{k}\times \R^{n}
\times \R^{n} \rightarrow \R$ is defined by $F(t,s,v)=L(t,s,v)-\sum _{i=1}^{m}\lambda _{i}G_{i}(t,s,v).$
Note that
\begin{equation*}
\int_{a}^{b}\left(\int_{a}^{t}F_{s}(\tau,\hat{y}^{\sigma}(\tau),\hat{y}^{\triangle}(\tau)
)\triangle \tau \eta(t)\right)^{{\triangle}}\triangle t=\int_{a}^{t}
F_{s}(\tau,\hat{y}^{\sigma}(\tau),\hat{y}^{\triangle}(\tau )
\triangle \tau \eta(t)|^{t=b}_{t=a}=0
\end{equation*}
and
\begin{multline*}
 \int_{a}^{b}\left(\int_{a}^{t}F_{s}(\tau,\hat{y}^{\sigma}(\tau),\hat{y}^{\triangle}(\tau)
)\triangle \tau \eta(t)\right)^{{\triangle}}\triangle t\\
= \int_{a}^{b}\left\{\left(\int_{a}^{t}
F_{s}\left(\tau,\hat{y}^{\sigma}(\tau),\hat{y}^{\triangle}(\tau)\right)
\triangle \tau\right)^{\triangle} \eta^{\sigma}(t) + \int_{a}^{t}
F_{s}(\tau,\hat{y}^{\sigma}(\tau),\hat{y}^{\triangle}(\tau)
)\triangle \tau \eta^{\triangle}(t)\right\}\triangle t
\end{multline*}
\begin{equation*}
=\int_{a}^{b}\left\{F_{s}(t,\hat{y}^{\sigma}(t),\hat{y}^{\triangle}(t)
)\eta^{\sigma}(t)+\int_{a}^{t}
F_{s}(\tau,\hat{y}^{\sigma}(\tau),\hat{y}^{\triangle}(\tau)
)\triangle \tau \eta^{\triangle}(t)\right\}\triangle t.
\end{equation*}
Therefore,
\begin{equation*}
0=\int_{a}^{b}\left\{F_{v}(t,\hat{y}^{\sigma}(t),\hat{y}^{\triangle}(t)
)-\int_{a}^{t}
F_{s}(\tau,\hat{y}^{\sigma}(\tau),\hat{y}^{\triangle}(\tau)
)\triangle \tau \right\}\eta^{\triangle}(t)\triangle t.
\end{equation*}
Since the function $\eta$ is arbitrary, Lemma~\ref{lemma:DR} implies
that
\begin{equation*}
F_{v}(t,\hat{y}^{\sigma}(t),\hat{y}^{\triangle}(t) )-\int_{a}^{t}
F_{s}(\tau,\hat{y}^{\sigma}(\tau),\hat{y}^{\triangle}(\tau)
)\triangle \tau=c
\end{equation*}
for some $c\in \R^{n}$ and all $t\in[a,b]^{k}$. Hence,
\begin{equation}\label{ele}
F_{v}^{\triangle}(t,\hat{y}^{\sigma}(t),\hat{y}^{\triangle}(t) )=
F_{s}(t,\hat{y}^{\sigma}(t),\hat{y}^{\triangle}(t))
\end{equation}
for all $t\in[a,b]^{k^{2}}$.

We have just proved the following  necessary optimality condition
for the isoperimetric problem with multiple constrains on time
scales.

\begin{Th}
\label{Th:B:EL-CV} Let us assumptions of Theorem \ref{lm} hold but
condition \eqref{lmi} does not hold.  If $\hat{y} \in
C_{\textrm{rd}}^1$ is a weak local minimum of the problem
\eqref{vp}-\eqref{con}, then it satisfies the Euler-Lagrange
equation \eqref{ele} for all $t\in[a,b]^{k^{2}}$.
\end{Th}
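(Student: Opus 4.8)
The plan is to specialize the Lagrange multiplier rule of Theorem~\ref{lm} and then to extract a pointwise Euler--Lagrange equation by means of the Dubois--Reymond lemma. Since by hypothesis condition \eqref{lmi} fails, Theorem~\ref{lm} supplies constants $\lambda_1,\ldots,\lambda_m\in\R$ for which \eqref{lmii}, equivalently \eqref{lmiii}, holds for every $\eta\in C^1_{rd}$. I would introduce the combined Lagrangian $F(t,s,v)=L(t,s,v)-\sum_{i=1}^m\lambda_i G_i(t,s,v)$ together with the functional $\mathcal{F}=\mathcal{L}-\sum_{i=1}^m\lambda_i\mathcal{G}_i$, so that \eqref{lmiii} simply reads $\delta\mathcal{F}[\hat{y};\eta]=0$.

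First I would restrict attention to those directions $\eta\in C^1_{rd}$ with $\eta(a)=\eta(b)=0$, which are the relevant variations because the endpoint values of $\hat y$ are pinned by \eqref{bc}. Differentiating under the integral sign---justified by the continuity and rd-continuity hypotheses placed on $L$ and the $G_i$ and by the assumed continuity of the G\^ateaux derivatives---gives
\begin{equation*}
0=\delta\mathcal{F}[\hat{y};\eta]=\int_{a}^{b}\bigl(F_{s}(t,\hat{y}^{\sigma}(t),\hat{y}^{\triangle}(t))\,\eta^{\sigma}(t)+F_{v}(t,\hat{y}^{\sigma}(t),\hat{y}^{\triangle}(t))\,\eta^{\triangle}(t)\bigr)\triangle t.
\end{equation*}

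The crucial step, and the one I expect to be the main obstacle, is to bring this identity into a form to which Lemma~\ref{lemma:DR} applies, namely a single rd-continuous coefficient multiplied by $\eta^{\triangle}$ alone. On a time scale this is delicate, since the $\eta^{\sigma}$ term cannot be integrated by parts without incurring the forward shift. The device I would use is to set $H(t)=\int_{a}^{t}F_{s}(\tau,\hat{y}^{\sigma}(\tau),\hat{y}^{\triangle}(\tau))\,\triangle\tau$, so that $H^{\triangle}(t)=F_{s}(t,\hat{y}^{\sigma}(t),\hat{y}^{\triangle}(t))$. Applying the delta product rule $(H\eta)^{\triangle}=H^{\triangle}\eta^{\sigma}+H\,\eta^{\triangle}$ and using $\eta(a)=\eta(b)=0$ yields $\int_{a}^{b}(H\eta)^{\triangle}\triangle t=[H\eta]_{a}^{b}=0$, hence
\begin{equation*}
0=\int_{a}^{b}\bigl(F_{s}(t,\hat{y}^{\sigma}(t),\hat{y}^{\triangle}(t))\,\eta^{\sigma}(t)+H(t)\,\eta^{\triangle}(t)\bigr)\triangle t.
\end{equation*}
Subtracting this from the first-variation equation cancels the $\eta^{\sigma}$ contribution and leaves
\begin{equation*}
0=\int_{a}^{b}\bigl(F_{v}(t,\hat{y}^{\sigma}(t),\hat{y}^{\triangle}(t))-H(t)\bigr)\,\eta^{\triangle}(t)\,\triangle t
\end{equation*}
for all such $\eta$.

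Finally, the coefficient $t\mapsto F_{v}(t,\hat{y}^{\sigma}(t),\hat{y}^{\triangle}(t))-H(t)$ is rd-continuous, being a composition of the rd-continuous data with the rd-continuous functions $\hat y^{\sigma},\hat y^{\triangle}$ minus the continuous antiderivative $H$; so Lemma~\ref{lemma:DR} furnishes a constant $c\in\R^{n}$ with $F_{v}(t,\ldots)-H(t)=c$ on $[a,b]^{k}$. Delta-differentiating this identity and using $H^{\triangle}=F_{s}$ produces the Euler--Lagrange equation \eqref{ele}; the domain reduces to $[a,b]^{k^{2}}=([a,b]^{k})^{k}$ precisely because one differentiates, once more in the $\triangle$ sense, an equality only known to hold on $[a,b]^{k}$. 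I anticipate that the sole genuinely time-scale-specific care is required in the integration-by-parts step above and in checking the rd-continuity and endpoint hypotheses demanded by Lemma~\ref{lemma:DR}.
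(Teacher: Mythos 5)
Your proposal is correct and follows essentially the same route as the paper: invoke Theorem~\ref{lm} to obtain the multipliers, form $F=L-\sum\lambda_i G_i$, introduce the antiderivative $H(t)=\int_a^t F_s\,\triangle\tau$ and use the delta product rule with $\eta(a)=\eta(b)=0$ to eliminate the $\eta^\sigma$ term, then apply Lemma~\ref{lemma:DR} and delta-differentiate the resulting identity on $[a,b]^{k}$ to obtain \eqref{ele} on $[a,b]^{k^2}$. No substantive difference from the paper's argument.
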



\subsection{Pareto optimality}
\label{subsec:po}

Let us consider a finite number $d\geq 1$ of (objective)
functionals:
\begin{equation}\label{mvp}
     \mathcal{L}_{i}[y]=\int_{a}^{b}L_{i}(t,y^{\sigma}(t),y^{\triangle}(t))\triangle
     t , \quad i=1,\ldots d ,
\end{equation}
where $a, b\in \mathbb {T}$ with $a<b$,
$L_{i}(t,s,v):[a,b]^{k}\times \R^{n} \times \R^{n} \rightarrow \R$,
$i=1,\ldots d$, have partial continuous derivatives with respect to
the second and third variables for all $t\in[a,b]^{k}$, and
$L_{i}(t,\cdot,\cdot)$ and theirs partial derivatives, $i=1,\ldots
d$, are rd-continuous at $t$. We would like to find a function $y\in
C^{1}_{rd}$, satisfying the boundary conditions \eqref{bc}, that
renders the minimum value to each functional $\mathcal{L}_i$, $i =
1,\ldots,d$, simultaneously. In general, there does not exist such a
function, and one uses the concept of Pareto optimality.
\begin{df}
A function $\hat{y}\in C^{1}_{rd}$ is called a \emph{weak locally
Pareto optimal solution} if there exists $\delta >0$ such that there
does not exist $y\in C^{1}_{rd}$ with
$\|y-\hat{y}\|_{C^{1}_{rd}}<\delta$ and
\begin{equation*}
\forall i\in\{1,\ldots,d\} :\mathcal{L}_{i}[y]\leqslant
\mathcal{L}_{i}[\hat{y}]\wedge \exists j \in\{1,\ldots,d\} :
\mathcal{L}_{j}[y] < \mathcal{L}_{j}[\hat{y}] \, .
\end{equation*}
\end{df}

\begin{Th}\label{sc}
If $\hat{y}$ is a weak local minimum of the functional
$\sum_{i=1}^{d}\gamma_{i}\mathcal{L}_{i}[y]$ with $\gamma_i > 0$
for $i = 1,\ldots,d$ and $\sum_{i=1}^{d} \gamma_i = 1$, then it is
a weak locally Pareto optimal solution of the multiobjective problem with functionals \eqref{mvp}.
\end{Th}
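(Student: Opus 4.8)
The statement is the classical weighted-sum sufficient condition for Pareto optimality, and the proof should be a short contradiction argument. The plan is to suppose that $\hat{y}$ is a weak local minimum of the scalarized functional $\mathcal{S}[y]=\sum_{i=1}^{d}\gamma_i\mathcal{L}_i[y]$ but is \emph{not} a weak locally Pareto optimal solution, and then derive a contradiction with the minimality of $\mathcal{S}$ at $\hat{y}$.

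Concretely, let $\delta>0$ be the radius from the definition of weak local minimum of $\mathcal{S}$, so that $\mathcal{S}[\hat{y}]\leq\mathcal{S}[y]$ for every admissible $y$ with $\|y-\hat{y}\|_{C^1_{rd}}<\delta$. First I would negate the definition of weak locally Pareto optimality: for \emph{every} positive radius, in particular for this same $\delta$, there exists a competitor $y\in C^1_{rd}$ with $\|y-\hat{y}\|_{C^1_{rd}}<\delta$ satisfying $\mathcal{L}_i[y]\leq\mathcal{L}_i[\hat{y}]$ for all $i\in\{1,\ldots,d\}$ and $\mathcal{L}_j[y]<\mathcal{L}_j[\hat{y}]$ for at least one index $j$. (If one prefers to use the same $\delta$ for both notions, take the infimum of the two radii; I would simply fix the Pareto radius to be the one coming from the scalar minimum.)

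The key step is to combine these inequalities using the sign hypotheses on the weights. Multiplying each inequality $\mathcal{L}_i[y]\leq\mathcal{L}_i[\hat{y}]$ by $\gamma_i>0$ preserves its direction, and the single strict inequality $\mathcal{L}_j[y]<\mathcal{L}_j[\hat{y}]$, multiplied by $\gamma_j>0$, stays strict; summing over $i=1,\ldots,d$ then yields
\begin{equation*}
\sum_{i=1}^{d}\gamma_i\mathcal{L}_i[y]<\sum_{i=1}^{d}\gamma_i\mathcal{L}_i[\hat{y}],
\end{equation*}
that is, $\mathcal{S}[y]<\mathcal{S}[\hat{y}]$. Since $y$ also satisfies $\|y-\hat{y}\|_{C^1_{rd}}<\delta$, this contradicts the assumption that $\hat{y}$ is a weak local minimum of $\mathcal{S}$ on that neighborhood. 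Hence no such competitor $y$ exists and $\hat{y}$ is weak locally Pareto optimal.

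There is essentially no analytic obstacle here: the normalization $\sum_i\gamma_i=1$ plays no role in the argument (it only fixes a canonical representative of the ray of weights), and the time-scale structure enters only through the definition of the functionals, not the logic. The one point requiring a little care is the bookkeeping of the neighborhoods — making sure the \emph{same} competitor $y$ lies in the $\delta$-ball used for the scalar minimum — which is handled by choosing the Pareto-radius equal to the scalar-minimum radius $\delta$. I expect this to be the only subtlety, and it is purely a matter of quantifier management rather than a genuine difficulty.
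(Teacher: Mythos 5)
Your proposal is correct and follows essentially the same route as the paper: assume $\hat{y}$ is not weak locally Pareto optimal, produce a competitor $y$ in the $\delta$-ball of the scalar minimum, multiply the componentwise inequalities by the positive weights $\gamma_i$, and sum to contradict $\mathcal{S}[\hat{y}]\leq\mathcal{S}[y]$. Your extra remarks on matching the two radii and on the irrelevance of the normalization $\sum_i\gamma_i=1$ are accurate but do not change the argument.
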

\begin{proof}
Let $\hat{y}$ be a weak local minimum of the functional
$\sum_{i=1}^{d}\gamma_{i}\mathcal{L}_{i}[y]$ with $\gamma_i > 0$
for $i = 1,\ldots,d$ and $\sum_{i=1}^{d} \gamma_i = 1$. Suppose on
the contrary that $\hat{y}$ is not a weak locally Pareto optimal. Then, for
every $\delta >0$ there exists $y$ with
$\|y-\hat{y}\|_{C^{1}_{rd}}<\delta$ such that $\forall
i\in\{1,\ldots,d\}$ we have $\mathcal{L}_{i}[y]\leqslant
\mathcal{L}_{i}[\hat{y}]$ and $\exists j \in\{1,\ldots,d\}$ such
that $\mathcal{L}_{j}[y] < \mathcal{L}_{j}[\hat{y}]$. Since
$\gamma_i
> 0$ for $i = 1,\ldots,d$, we obtain
$\sum_{i=1}^{d}\gamma_{i}\mathcal{L}_{i}[y]<
\sum_{i=1}^{d}\gamma_{i}\mathcal{L}_{i}[\hat{y}]$. This
contradicts our choice of $\hat{y}$.
\end{proof}

\begin{Th}\label{nc}
If  $\hat{y}$ is a weak locally Pareto optimal solution of the multiobjective problem with functionals
\eqref{mvp}, then it minimizes each one of the scalar functionals
\begin{equation*}
 \mathcal{L}_{i}[y]\, , \quad i \in \{1,\ldots,d\}
\end{equation*}
subject to the constraints
\begin{equation*}
\mathcal{L}_{j}[y]=\mathcal{L}_{j}[\hat{y}] \, , \quad j =
1,\ldots,d  \text{ and } j \ne i \, .
\end{equation*}
\end{Th}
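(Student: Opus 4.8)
The plan is to argue by contradiction, reducing the claim to the two definitions already in place (weak local minimum and weak locally Pareto optimal solution). First I would fix an index $i\in\{1,\ldots,d\}$ and read the conclusion ``$\hat{y}$ minimizes $\mathcal{L}_i$ subject to the constraints $\mathcal{L}_j[y]=\mathcal{L}_j[\hat{y}]$, $j\neq i$'' as the assertion that $\hat{y}$ is a weak local minimum, in the earlier sense, of $\mathcal{L}_i$ over the admissible class $A=\{y\in C^{1}_{rd}: y(a)=\alpha,\ y(b)=\beta,\ \mathcal{L}_j[y]=\mathcal{L}_j[\hat{y}]\text{ for all }j\neq i\}$. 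Establishing this for every $i$ proves the theorem.

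Next I would exploit the weak local Pareto optimality of $\hat{y}$ to extract a \emph{single} radius $\delta>0$ for which no $y\in C^{1}_{rd}$ with $\|y-\hat{y}\|_{C^{1}_{rd}}<\delta$ dominates $\hat{y}$ in the Pareto sense. Then, assuming for contradiction that $\hat{y}$ is \emph{not} a weak local minimum of $\mathcal{L}_i$ over $A$, I would negate the definition of weak local minimum for precisely this $\delta$: this produces an admissible $y\in A$ with $\|y-\hat{y}\|_{C^{1}_{rd}}<\delta$ and $\mathcal{L}_i[y]<\mathcal{L}_i[\hat{y}]$. Since $y\in A$ forces $\mathcal{L}_j[y]=\mathcal{L}_j[\hat{y}]$ for all $j\neq i$, combining these equalities with the strict inequality at $i$ yields $\mathcal{L}_k[y]\leq\mathcal{L}_k[\hat{y}]$ for every $k\in\{1,\ldots,d\}$, with strict inequality for $k=i$. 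This is exactly the domination ruled out by the choice of $\delta$, the contradiction that closes the argument.

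I do not expect a genuine analytic obstacle here; the proof is a direct logical consequence of the two definitions, with no appeal to the time-scale calculus itself. The one point deserving care is the matching of the quantifiers on $\delta$: weak local Pareto optimality supplies one such $\delta$, whereas the negation of the weak local minimum property is a statement ``for every $\delta>0$ there exists a competitor'', so it may in particular be instantiated at the $\delta$ furnished by Pareto optimality. Verifying that the two neighborhoods are taken in the same norm $\|\cdot\|_{C^{1}_{rd}}$, and that all competing functions are drawn from the same admissible class (sharing the boundary conditions \eqref{bc}), is the only bookkeeping required.
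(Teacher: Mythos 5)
Your proposal is correct and follows essentially the same route as the paper: negate the weak local minimality of $\mathcal{L}_i$ over the constrained class, produce a competitor $y$ within the $\delta$-neighborhood with $\mathcal{L}_i[y]<\mathcal{L}_i[\hat{y}]$ and $\mathcal{L}_j[y]=\mathcal{L}_j[\hat{y}]$ for $j\ne i$, and observe that this contradicts weak local Pareto optimality. Your explicit attention to instantiating the ``for every $\delta$'' negation at the particular $\delta$ supplied by Pareto optimality is a point the paper glosses over, but the argument is the same.
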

\begin{proof}
Let $\hat{y}$ be a weak locally Pareto optimal solution of the
problem on time scales \eqref{mvp} and suppose the contrary,
\textrm{i.e.} that for some $i$ $\hat{y}$ does not solve the problem
$\mathcal{L}_{i}[y]\rightarrow min$ subject to
$\mathcal{L}_{j}[y]=\mathcal{L}_{j}[\hat{y}], j = 1,\ldots,d \, (j
\ne i).$ Then, for every $\delta >0$ there exists $y$ with
$\|y-\hat{y}\|_{C_{rd}^1} < \delta$ such that
$\mathcal{L}_{i}[y]<\mathcal{L}_{i}[\hat{y}]$ and
$\mathcal{L}_{j}[y]=\mathcal{L}_{j}[\hat{y}], j = 1,\ldots,d \, (j
\ne i)$. This contradicts the weak local Pareto optimality of
$\hat{y}$.
\end{proof}

\begin{Example}
Let $\mathbb {T}=\{0,1,2\}$. We would like to find locally Pareto
optimal solutions for
\begin{eqnarray*}
 \mathcal{L}_{1}[y]=\int_{0}^{2}y^{2}(t+1)\triangle t,\\
 \mathcal{L}_{2}[y]=\int_{0}^{2}(y(t+1)-2)^{2}\triangle t
\end{eqnarray*}
satisfying the boundary conditions $y(0)=0$, $y(2)=0$. Note that
\begin{equation*}
 \mathcal{L}_{1}[y]=\sum_{t=0}^{1}y^{2}(t+1) \, , \quad
 \mathcal{L}_{2}[y]=\sum_{t=0}^{1}(y(t+1)-2)^{2} \, ,
\end{equation*}
and that the possible solutions are of the form
\begin{gather*}
y(t)=
\begin{cases}
0 & \text{ if } t=0 \\
 a & \text{ if } t=1\\
 0 & \text{ if } t=2 \, ,
\end{cases}
\end{gather*}
where $a\in \R$. On account of the above, we have
$\mathcal{L}_{1}[y(t)]=a^{2}$, and
$\mathcal{L}_{2}[y(t)]=4+(a-2)^{2}$.
Using Theorem \ref{sc} we obtain that locally Pareto optimal solutions
for functionals $\mathcal{L}_{1}$, $\mathcal{L}_{2}$ are
\begin{gather*}
y(t)=
\begin{cases}
0 & \text{ if } t=0 \\
 a & \text{ if } t=1 \, ,\\
 0 & \text{ if } t=2
\end{cases}
\quad a\in [0,2] \, .
\end{gather*}
\end{Example}




\begin{thebibliography}{99}

\bibitem{b1} C. D. Ahlbrandt, M. Bohner\ and\ J. Ridenhour,
Hamiltonian systems on time scales, J. Math. Anal. Appl.
{\bf 250} (2000), no.~2, 561--578.

\bibitem{b2} B. Aulbach\ and\ S. Hilger,
Linear dynamic processes with inhomogeneous time scale,
in {\it Nonlinear dynamics and quantum dynamical
systems (Gaussig, 1990)}, 9--20, Akademie Verlag, Berlin.

\bibitem{Zbig} Z. Bartosiewicz\ and\ D. F. M. Torres,
Noether's theorem on time scales, J. Math. Anal. Appl. (accepted)
\texttt{arXiv:0709.0400}

\bibitem{b7} M. Bohner, Calculus of variations on time scales,
Dynam. Systems Appl. {\bf 13} (2004), no.~3-4, 339--349.

\bibitem{livro} M. Bohner\ and\ A. Peterson,
{\it Dynamic equations on time scales},
Birkh\"auser Boston, Boston, MA, 2001.

\bibitem{b3} Y. Censor, Pareto optimality in
multiobjective problems, Appl. Math. Optim.
{\bf 4} (1977/78), no.~1, 41--59.

\bibitem{b4} R. A. C. Ferreira\ and\ D. F. M. Torres,
Higher-order calculus of variations on time
scales. Proc. Workshop on Mathematical
Control Theory and Finance, Lisbon, 10-14 April 2007, pp.~150--158.
To appear in Springer---Business/Economics and Statistics (accepted).
\texttt{arXiv:0706.3141}.

\bibitem{b6} D. T. Luc\ and\ S. Schaible,
Efficiency and generalized concavity,
J. Optim. Theory Appl. {\bf 94} (1997), no.~1, 147--153.

\bibitem{b5} K. Miettinen, {\it Nonlinear multiobjective optimization},
Kluwer Acad. Publ., Boston, MA, 1999.

\bibitem{Trout} J. L. Troutman, {\it Variational calculus and optimal control},
Second edition, Springer, New York, 1996.

\end{thebibliography}
\end{document}